\let\bbordermatrix\bordermatrix
\patchcmd{\bbordermatrix}{8.75}{4.75}{}{}
\patchcmd{\bbordermatrix}{\left(}{\left[}{}{}
\patchcmd{\bbordermatrix}{\right)}{\right]}{}{}
\patchcmd{\bbordermatrix}{\begingroup}{\begingroup\openup2\jot}{}{}
\newcommand{\Rmnum}[1]{\expandafter\@slowromancap\romannumeral #1@}
\def\Xint#1{\mathchoice
{\XXint\displaystyle\textstyle{#1}}%
{\XXint\textstyle\scriptstyle{#1}}%
{\XXint\scriptstyle\scriptscriptstyle{#1}}%
{\XXint\scriptscriptstyle\scriptscriptstyle{#1}}%
\!\int}
\def\XXint#1#2#3{{\setbox0=\hbox{$#1{#2#3}{\int}$}
\vcenter{\hbox{$#2#3$}}\kern-.5\wd0}}
\newcommand{\vast}{\bBigg@{4}}
\newcommand{\Vast}{\bBigg@{5}}
\newcommand{\dashint}{\Xint-}
\def\Xint#1{\mathchoice
{\XXint\displaystyle\textstyle{#1}}%
{\XXint\textstyle\scriptstyle{#1}}%
{\XXint\scriptstyle\scriptscriptstyle{#1}}%
{\XXint\scriptscriptstyle\scriptscriptstyle{#1}}%
\!\int}
\def\XXint#1#2#3{{\setbox0=\hbox{$#1{#2#3}{\int}$}
\vcenter{\hbox{$#2#3$}}\kern-.5\wd0}}
\theoremstyle{definition}
\newtheorem{theorem}{Theorem}[section]
\newtheorem{lemma}[theorem]{Lemma}
\newtheorem{remark}{Remark}[section]
\providecommand{\keywords}[1]
{
 \small	
  \textbf{\textit{Keywords:}} #1
}
\begin{document}

\title{Integrability of the multi-species TASEP with species-dependent rates}
\author{\textbf{Eunghyun Lee\footnote{eunghyun.lee@nu.edu.kz}}\\ {\text{Department of Mathematics, Nazarbayev University}}
                                         \date{}   \\ {\text{Kazakhstan}}   }

\date{}
\maketitle

\begin{abstract}
\noindent Assume that each species $l$ has its own jump rate $b_l$ in the multi-species totally asymmetric simple exclusion process. We show that this model is \textit{integrable} in the sense that the Bethe Ansatz method is applicable  to obtain the transition probabilities for all possible $N$-particle systems with up to $N$ different species.\\ \\
\keywords{
TASEP, ASEP, Multi-species, Bethe Ansatz}
\end{abstract}

\section{Introduction}
The multi-species asymmetric simple exclusion process on $\mathbb{Z}$ is a generalization of the asymmetric simple exclusion process (ASEP) on $\mathbb{Z}$ in the sense that each particle may belong to a different species labelled by an integer $l \in \{1,2,\cdots\}$. Each particle jumps to the right by one step with probability $p$ or to the left by one step with probability $q = 1-p$ after waiting time exponentially distributed with rate 1. If a particle belonging to $l$ tries to jump to the site occupied by a particle belonging to $l'\geq l$, the jump is prohibited but if a particle belonging to $l'$ tries to jump to the site occupied by a particle belonging to $l<l'$, then the jump occurs by interchanging positions.  The transition probabilities and some determinantal formulas for the multi-species ASEP or its special cases were found in \cite{Chatterjee-Schutz-2010,Kuan-2020,Lee-2018,Lee-2020,Tracy-Widom-2013}. Also, for some special initial conditions with a single second class particle, some distributions and their asymptotics were studied in \cite{Ferrari-Ghosal-Nejjar-2019,Nejjar-2020}. More recently, asymptotic behaviors of the second class particles were studied by using the color-position symmetry, see\cite{Borodin-Bufetov}. In fact, the multi-species asymmetric simple exclusion process can be considered  in more general context, that is, the coloured six vertex model \cite{Borodin-Wheeler}. Another direction of generalizing the ASEP and other models studied in the integrable probability  is to make the jump rates  inhomogeneous. It is known that the Bethe Ansatz method is still applicable to some single-species model with inhomogeneous jump rates. The basic idea of using the Bethe Ansatz in the ASEP is from that the generator of the ASEP is a similarity transformation of that of the XXZ quantum spin system. Considering that the Bethe Ansatz is a method to find eigenvalues and eigenvectors of a certain class of quantum spin systems, we use the Bethe Ansatz to find the solution of the forward equation of a certain class of Markov processes, that is, the transition probabilities of the processes. Of course, for some particle models the Bethe Ansatz method cannot be used. For the background of Bethe Ansatz, see \cite{Baxter,Karbach-Muller}. It is known that the Bethe Ansatz is applicable to some generalization of the ASEP.  For example,  the transition probability and the current distribution of the totally asymmetric simple exclusion process (TASEP) with particle-dependent rates were studied in \cite{Rakos-Schutz-2006},  and the transition probabilities and some asymptotic results for the $q$-deformed totally asymmetric zero range process with site-dependent rates were studied in \cite{Wang-Waugh-2016,Lee-Wang-2019}.
 In this paper, we consider the multi-species totally asymmetric simple exclusion processes  with $N$ particles in which particles move to the right and each species $l$ is allowed  to have its own rate $b_l$. Following the notations used in \cite{Lee-2020}, let $X = (x_1,\cdots, x_N) \in \mathbb{Z}^N$ with  $x_1<\cdots <x_N$ represent the positions of particles, and  let $\pi= \pi(1)\pi(2)~\cdots~\pi(N)$ be a permutation of a multi-set $\mathcal{M} = [i_1,\cdots,i_N]$ with elements taken from $\{1,\cdots, N\}$ and $\pi(i)$ represent the species of the $i^{th}$ leftmost particle. Then, a state of an $N$-particle system is denoted by
\begin{equation*}
(X, \pi) = \big(x_1,\cdots, x_N, \pi(1), \cdots, \pi(N)\big).
\end{equation*}
Let us write $P_{(Y,\nu)}(X,\pi;t)$ for the transition probability from $(Y,\nu)$ at $t=0$ to $(X,\pi)$ at a later time $t$. For fixed $X$ and $Y$, $P_{(Y,\nu)}(X,\pi;t)$ is regarded as a matrix element of an $N^N \times N^N$ matrix denoted by $\mathbf{P}_Y(X;t)$ whose columns and rows are labelled by $\nu, \pi = 1\cdots 11, 1\cdots 12, \cdots, N \cdots N$, respectively. Throughout this paper, given an $N^n \times N^n$ matrix, we assume that its rows ($i_1\cdots i_n$) and columns ($j_1\cdots j_n$) are labelled by $1\cdots 1,\,\cdots,\,n\cdots n$ and these labels are listed lexicographically, unless stated otherwise. The main result of this paper is that the multi-species TASEP with species-dependent rates is an integrable model, and we provide an formula analogous to (2.12) in \cite{Lee-2020} by using  the Bethe Ansatz method.
\subsection{Statement of the results}\label{1023pm0527} We first introduce a few objects to state the main theorem.
Define an $N^2 \times N^2$ matrix $\mathbf{R}_{\beta\alpha} = \big[R_{ij,kl} \big]$   with
\begin{equation}\label{625pm724}
R_{ij,kl} = \begin{cases}
S_{\beta\alpha}(i)& ~\textrm{if}~~ij=kl~\textrm{with}~i\leq j;\\[3pt]
-1& ~\textrm{if}~~ij=kl~\textrm{with}~i>j;\\[3pt]
T_{\beta\alpha}(i)&~\textrm{if}~~ij=lk~\textrm{with}~i<j;\\[3pt]
0 &~\textrm{for all other cases},
\end{cases}
\end{equation}
where
\begin{equation}\label{1100am911}
\begin{aligned}
S_{\beta\alpha}(i) =&  -\frac{1 - b_i\xi_{\beta}}{1 - b_i\xi_{\alpha}},&~ T_{\beta\alpha}(i) =& \frac{b_i(\xi_{\beta}-\xi_{\alpha})}{1 - b_i\xi_{\alpha}}, ~~~~\xi_{\alpha},\xi_{\beta} \in \mathbb{C}.
\end{aligned}
\end{equation}
\begin{remark}
The form of  the matrix  (\ref{625pm724}) was obtained by induction via similar arguments to Section 2.1 and 2.2 in \cite{Lee-2018}  which treats a special case, and the motivation of (\ref{1100am911}) is given in Section \ref{0814pm742}. Finding the form of (\ref{625pm724}) with (\ref{1100am911}) is the key idea of this paper. 
\end{remark}
Let $\mathcal{T}_l$ be the simple transposition  which interchanges the number at the $l^{th}$ slot and the number at the $(l+1)^{st}$ slot.   If $\mathcal{T}_l$ maps a permutation $(~\cdots~ \alpha\beta ~\cdots)$ to $(~\cdots~ \beta\alpha ~\cdots)$, we write $\mathcal{T}_l = \mathcal{T}_l(\beta,\alpha)$ when necessary. Corresponding to  a simple transposition $\mathcal{T}_{l}(\beta,\alpha)$, we define $N^N \times N^N$ matrix $\mathbf{T}_{l}(\beta,\alpha)$ by the tensor product of matrices,
\begin{equation}\label{310am36}
\mathbf{T}_{l}(\beta,\alpha) =\underbrace{\mathbf{I}_N ~\otimes~ \cdots~ \otimes~  \mathbf{I}_N}_{(l-1)~\textrm{times}}~ \otimes~ \mathbf{R}_{\beta\alpha}~ \otimes ~ \underbrace{\mathbf{I}_N ~\otimes~ \cdots ~\otimes~  \mathbf{I}_N}_{(N-l-1)~\textrm{times}}
\end{equation}
where $\mathbf{I}_N$ is the $N \times N$ identity matrix. For a permutation $\sigma$ in the symmetric group $\mathcal{S}_N$ written
\begin{equation}\label{853pm35}
\sigma = \mathcal{T}_{i_j}\cdots \mathcal{T}_{i_1} = \mathcal{T}_{i_j}(\beta_j,\alpha_j)\cdots \mathcal{T}_{i_1}(\beta_1,\alpha_1)
\end{equation}
for some $i_1,\cdots, i_j \in \{1,\cdots, N-1\}$, we define
\begin{equation}\label{305am519}
\mathbf{A}_{\sigma} = \mathbf{T}_{i_j}(\beta_j,\alpha_j)~\cdots~\mathbf{T}_{i_1}(\beta_1,\alpha_1).
\end{equation}

Here, $\mathbf{A}_{\sigma}$ is well defined, that is, $\mathbf{A}_{\sigma}$ is unique regardless of the representation of $\sigma$ by simple transpositions. This \textit{well-definedness} is due to  the following lemma.
\begin{lemma}\label{509pm519} The following consistency relations are satisfied.
\begin{itemize}
  \item [(a)] $ \mathbf{T}_i(\beta,\alpha)\mathbf{T}_j(\delta,\gamma) = \mathbf{T}_j(\delta,\gamma)\mathbf{T}_i(\beta,\alpha) ~~~ \textrm{if}~|i - j| \geq 2.$
  \item [(b)] $\mathbf{T}_i(\gamma,\beta)\mathbf{T}_j(\gamma,\alpha)\mathbf{T}_i(\beta,\alpha) = \mathbf{T}_j(\beta,\alpha) \mathbf{T}_i(\gamma,\alpha)\mathbf{T}_j(\gamma,\beta) ~~~\textrm{if}~|i-j| = 1.$
  \item [(c)] $\mathbf{T}_i(\beta,\alpha) \mathbf{T}_i(\alpha,\beta) = \mathbf{I}_{N^N}.$
\end{itemize}
\end{lemma}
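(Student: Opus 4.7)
The plan is to push each identity through the tensor product (\ref{310am36}) to reduce it to a statement about $\mathbf{R}_{\beta\alpha}$ alone, and then to verify the resulting $\mathbf{R}$-matrix identity by direct case analysis on basis vectors $|i_1\cdots i_n\rangle$ of $(\compC^N)^{\otimes n}$ using the explicit form (\ref{1100am911}).

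Part (a) is immediate: when $|i-j|\geq 2$, the matrices $\mathbf{R}_{\beta\alpha}$ and $\mathbf{R}_{\delta\gamma}$ sit in disjoint pairs of tensor slots, so the two factors commute by the definition of the Kronecker product. For part (c), the tensor structure reduces the claim to $\mathbf{R}_{\beta\alpha}\mathbf{R}_{\alpha\beta}=\mathbf{I}_{N^2}$. I would verify this block by block: on each $|ii\rangle$ the statement reduces to $S_{\beta\alpha}(i)S_{\alpha\beta}(i)=1$, while for $i<j$ the subspace $\mathrm{span}\{|ij\rangle,|ji\rangle\}$ is invariant under $\mathbf{R}_{\beta\alpha}$ and carries the upper triangular block
\begin{equation*}
\begin{pmatrix} S_{\beta\alpha}(i) & T_{\beta\alpha}(i)\\ 0 & -1 \end{pmatrix};
\end{equation*}
multiplying by its $\alpha\leftrightarrow\beta$ counterpart and using the additional identity $S_{\beta\alpha}(i)T_{\alpha\beta}(i)-T_{\beta\alpha}(i)=0$ (both identities are one-line substitutions of (\ref{1100am911})) produces the identity matrix.

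For (b) I may take $j=i+1$ without loss of generality; stripping the common identity factors then reduces the claim to the Yang--Baxter relation
\begin{equation*}
(\mathbf{R}_{\gamma\beta}\otimes \mathbf{I}_N)(\mathbf{I}_N\otimes \mathbf{R}_{\gamma\alpha})(\mathbf{R}_{\beta\alpha}\otimes \mathbf{I}_N) = (\mathbf{I}_N\otimes \mathbf{R}_{\beta\alpha})(\mathbf{R}_{\gamma\alpha}\otimes \mathbf{I}_N)(\mathbf{I}_N\otimes \mathbf{R}_{\gamma\beta})
\end{equation*}
on $(\compC^N)^{\otimes 3}$. I would evaluate both sides on each basis vector $|i_1 i_2 i_3\rangle$. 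Since each $\mathbf{R}$-factor preserves the multiset of species in the two slots on which it acts, the multiset $\{i_1,i_2,i_3\}$ is preserved by both sides, and the verification splits naturally by the ordering of $(i_1,i_2,i_3)$. When at least two entries coincide, or when $i_1<i_2<i_3$, both sides collapse to products of diagonal $S$-factors, and agreement follows from the one-line identity $S_{\gamma\beta}(i)S_{\beta\alpha}(i)=-S_{\gamma\alpha}(i)$.

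The main obstacle, and the algebraic content of the lemma, is the fully descending case $i_1>i_2>i_3$ (the partially mixed cases such as $i_1=i_3\neq i_2$ are similar but shorter). Here both sides spread over all six permutations of $\{i_1,i_2,i_3\}$; after collecting coefficients, agreement on the initial vector $|i_1 i_2 i_3\rangle$ and on the fully reversed vector is automatic, while the four intermediate basis vectors force the two rational identities
\begin{equation*}
T_{\gamma\beta}(m)-T_{\beta\alpha}(m)S_{\gamma\beta}(m)=T_{\gamma\alpha}(m), \qquad T_{\beta\alpha}(m)-T_{\gamma\beta}(m)S_{\beta\alpha}(m)=T_{\gamma\alpha}(m).
\end{equation*}
Each of these is verified by a short fraction computation using (\ref{1100am911}) (in the first, the numerator of the difference simplifies to $b_m(\xi_\gamma-\xi_\alpha)(1-b_m\xi_\beta)$, cancelling one factor in the denominator; the second is even shorter). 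Together these two identities are the heart of the argument and explain the motivation alluded to in the remark after (\ref{1100am911}) for the precise form of $S_{\beta\alpha}(i)$ and $T_{\beta\alpha}(i)$.
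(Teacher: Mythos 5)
Your overall route is the same as the paper's: push each identity through the tensor product (\ref{310am36}) to a statement about $\mathbf{R}_{\beta\alpha}$ alone, dispose of (a) by disjointness of slots, verify (c) on the $1\times 1$ block $ii$ and the upper-triangular $2\times 2$ blocks on $\{ij,ji\}$, and reduce (b) to the Yang--Baxter relation on $(\compC^N)^{\otimes 3}$ checked blockwise; the scalar identities you isolate, $S_{\beta\alpha}(i)S_{\alpha\beta}(i)=1$, $S_{\beta\alpha}(i)T_{\alpha\beta}(i)=T_{\beta\alpha}(i)$, $S_{\gamma\beta}(i)S_{\beta\alpha}(i)=-S_{\gamma\alpha}(i)$, and the two relations $T_{\gamma\beta}(m)-T_{\beta\alpha}(m)S_{\gamma\beta}(m)=T_{\gamma\alpha}(m)$, $T_{\beta\alpha}(m)-T_{\gamma\beta}(m)S_{\beta\alpha}(m)=T_{\gamma\alpha}(m)$, are all correct consequences of (\ref{1100am911}) and are indeed what makes the blocks close.

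There is, however, a genuine flaw in your case split for (b): it is not true that both sides act diagonally ``when at least two entries coincide.'' The correct criterion is that $(i_1,i_2,i_3)$ be weakly increasing. For instance, if $i_1>i_2=i_3$ the first factor $\mathbf{R}_{\beta\alpha}\otimes\mathbf{I}_N$ already acts on the descending pair $(i_1,i_2)$ and produces an off-diagonal $T$-term, and if $i_1=i_2>i_3$ the same happens at the second factor; the same goes for the analogous arrangements of a multiset $[i,i,k]$ with $i<k$. These are precisely the blocks the paper verifies by explicit $3\times 3$ upper-triangular matrix products in its cases $i=j>k$ and $i=j<k$, so as written your plan would silently skip nontrivial checks (your parenthetical concedes only the arrangement $i_1=i_3\neq i_2$, which contradicts the blanket diagonal claim). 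The gap is repairable inside your framework: redo the split as ``weakly increasing'' versus ``not,'' treat every non-monotone arrangement by the same expansion you perform in the strictly descending case, and note that the resulting coefficient comparisons need only the two $T$--$S$ identities above together with the immediate equality $T_{\beta\alpha}(m)T_{\gamma\alpha}(m')=T_{\gamma\alpha}(m)T_{\beta\alpha}(m')$ read off from (\ref{1100am911}) (a cross-term matching that also appears, and should be mentioned, in your strictly descending computation). With that correction the argument is complete and is essentially the paper's proof, with the small $3\times3$ block computations replaced by explicitly displayed scalar identities.
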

The relations in Lemma \ref{509pm519} with $b_l = 1$ for all $l$ are already known for the multi-species ASEP.
\begin{remark}
The definitions of $\mathcal{T}_l$ and $\mathbf{A}_{\sigma}$ are motivated by the arguments for $N=2,3$ in Section 2.1 and 2.2 in \cite{Lee-2018} which treats a special case.
\end{remark}

Let $\mathbf{J}(t)$ be the $N^N \times N^N$ diagonal matrix whose $(\pi,\pi)$-element is given by  $e^{\varepsilon_{\pi\pi}t}$ where
\begin{equation*}
\varepsilon_{\pi\pi} = \varepsilon_{\pi\pi}(\xi_1,\cdots,\xi_N) =\frac{1}{\xi_1} + \cdots + \frac{1}{\xi_N} - \big(b_{\pi(1)} + \cdots +b_{\pi(N)}\big),~~~~\xi_1,\cdots, \xi_N \in \mathbb{C}
\end{equation*}
and let $\mathbf{D}(x_1,\cdots,x_N)$ be the $N^N \times N^N$ diagonal matrix whose $(\pi,\pi)$-element is given by  $b_{\pi(1)}^{x_1}\cdots b_{\pi(N)}^{x_N}$, and $\mathbf{D'}(y_1,\cdots,y_N)$ be the $N^N \times N^N$ diagonal matrix whose $(\nu,\nu)$-element is given by  $b_{\nu(1)}^{-y_1}\cdots b_{\nu(N)}^{-y_N}$ where $y_i$s are the initial positions. In the next theorem,   the integral of a matrix implies that the integral is taken element-wise, and $\dashint$ implies $\frac{1}{2\pi i}\int$.
\begin{theorem}\label{1249pm523}
Let $\mathbf{A}_{\sigma}$ be given as in (\ref{305am519}) and $c$ be a positively oriented circle centered at the origin with radius less than $b_l$ for all $l$ in the complex plane $\mathbb{C}$. Then, the matrix of the transition probabilities  of the multi-species TASEP with species-dependent rates is
\begin{equation}\label{758pm512}
 \mathbf{P}_Y(X;t)~=~\sum_{\sigma\in \mathcal{S}_N}\dashint_c\cdots \dashint_c \mathbf{J}(t)\mathbf{D}(x_1,\cdots,x_N)\mathbf{A}_{\sigma}\mathbf{D'}(y_1,\cdots,y_N)\prod_{i=1}^N\Big(\xi_{\sigma(i)}^{x_i-y_{\sigma(i)}-1}\Big)~ d\xi_1\cdots d\xi_N.
\end{equation}
\end{theorem}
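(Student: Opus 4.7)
The plan is to follow the Bethe-ansatz verification used in \cite{Tracy-Widom-2013,Lee-2018,Lee-2020}: I would check that the integrand in (\ref{758pm512}) (i) satisfies the free forward equation on the interior region $\{x_{j+1}\geq x_j+2\text{ for every }j\}$, (ii) satisfies a two-body compatibility identity on each boundary slice $x_{j+1}=x_j+1$, and (iii) reduces to $\delta_{X,Y}\,\mathbf{I}$ at $t=0$. Uniqueness of bounded solutions of the forward equation then identifies (\ref{758pm512}) with $\mathbf{P}_Y(X;t)$, and the well-definedness of $\mathbf{A}_{\sigma}$ needed along the way is supplied by Lemma \ref{509pm519}.

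Step (i) is a direct computation that works termwise in $\sigma$. Differentiating the integrand in $t$ extracts $\varepsilon_{\pi\pi}=\sum_k\xi_k^{-1}-\sum_kb_{\pi(k)}$ from $\mathbf{J}(t)$, while replacing $x_j$ by $x_j-1$ multiplies $\mathbf{D}(x_1,\dots,x_N)$ by $b_{\pi(j)}^{-1}$ and the product $\prod_i\xi_{\sigma(i)}^{x_i-y_{\sigma(i)}-1}$ by $\xi_{\sigma(j)}^{-1}$. Bijectivity of $\sigma$ gives $\sum_j\xi_{\sigma(j)}^{-1}=\sum_k\xi_k^{-1}$, and after multiplying each shift by the jump rate $b_{\pi(j)}$ one recovers the free master equation $\partial_tP=\sum_jb_{\pi(j)}(P(X-e_j,\pi)-P(X,\pi))$.

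The main obstacle is step (ii). On the slice $x_{j+1}=x_j+1$ the true generator differs from the free discrete Laplacian by a species-dependent ``block-or-swap'' correction involving the pair $(\pi(j),\pi(j+1))$. I would group the $N!$ terms of the sum in (\ref{758pm512}) into pairs $\{\sigma,\sigma'\}$, where $\sigma'$ is obtained from $\sigma$ by exchanging the values at slots $j,j+1$, and use $x_{j+1}=x_j+1$ to align the powers of $\xi_\alpha$ and $\xi_\beta$ (with $\alpha=\sigma(j)$, $\beta=\sigma(j+1)$) after factoring out the common product $\prod_{i\ne j,j+1}\xi_{\sigma(i)}^{x_i-y_{\sigma(i)}-1}$. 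The remaining identity collapses to the matrix relation $\mathbf{A}_{\sigma'}=\mathbf{A}_{\sigma}\,\mathbf{T}_j(\beta,\alpha)$ built into (\ref{305am519}), and reading this off on the pair of slots $(j,j+1)$ of $\pi$ produces a single two-body equation for every ordered species pair $(\alpha,\beta)$. The technical core is then to verify that the entries $S_{\beta\alpha}(i)$, $-1$ (on the $i>j$ diagonal of $\mathbf{R}$), and $T_{\beta\alpha}(i)$ prescribed by (\ref{625pm724})--(\ref{1100am911}) solve this equation: $S$ handles the same-species and blocked cases, $T$ the swap case, and the $-1$ cancels the ``phantom overlap'' term produced by the free Laplacian when two particles collide. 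This generalises the $N=2,3$ analysis in Section~2 of \cite{Lee-2018} to arbitrary species-dependent rates.

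For step (iii), $\mathbf{J}(0)=\mathbf{I}$ and the exponent of $\xi_k$ in the Bethe product becomes $x_{\sigma^{-1}(k)}-y_k-1$. Since $c$ encloses $0$ but no pole $\xi_\alpha=1/b_i$ of $\mathbf{R}_{\beta\alpha}$, only the residue at $\xi_k=0$ contributes, forcing $x_{\sigma^{-1}(k)}=y_k$ for every $k$. When $X=Y$ this selects $\sigma=\mathrm{id}$, and then $\mathbf{A}_{\mathrm{id}}=\mathbf{I}$ together with $\mathbf{D}(Y)\mathbf{D}'(Y)=\mathbf{I}$ returns $\delta_{\pi,\nu}$; when $X\ne Y$ the residue constraints either rule out every $\sigma$ or leave terms that cancel via the same R-matrix identities exploited in step (ii).
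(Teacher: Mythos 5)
Your steps (i) and (ii) follow essentially the paper's route: reduce the master equation to the free equation (\ref{832pm515}) plus the boundary condition (\ref{1019pm515}), verify the free part termwise in $\sigma$, and treat the slice $x_{j+1}=x_j+1$ by pairing $\sigma$ with $\mathcal{T}_j\sigma$ so that the two-body relation is absorbed into the R-matrix; this is exactly Lemma \ref{325am519} together with the factorized expression for $\mathbf{R}_{\beta\alpha}$ in terms of $\mathbf{B}+\mathbf{r}\otimes\mathbf{I}_N-\mathbf{C}$ stated just before that lemma. One slip there: the recursion built into (\ref{305am519}) is $\mathbf{A}_{\mathcal{T}_j\sigma}=\mathbf{T}_j(\beta,\alpha)\,\mathbf{A}_{\sigma}$, i.e.\ multiplication on the \emph{left} by the matrix acting on slots $(j,j+1)$; you wrote $\mathbf{A}_{\sigma'}=\mathbf{A}_{\sigma}\mathbf{T}_j(\beta,\alpha)$, and since these matrices do not commute, the boundary computation only closes with the left action.

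The genuine gap is in step (iii). The claim that ``only the residue at $\xi_k=0$ contributes, forcing $x_{\sigma^{-1}(k)}=y_k$ for every $k$'' is false for $\sigma\neq\mathrm{id}$: the entries of $\mathbf{A}_{\sigma}$ are products of the factors $S_{\beta\alpha}(i)$, $T_{\beta\alpha}(i)$ of (\ref{1100am911}), which depend on the integration variables, so when an exponent $x_{\sigma^{-1}(k)}-y_k-1\le -2$ the pole at the origin has higher order and its residue involves derivatives of $A_{\sigma}^{\pi\nu}$, which need not vanish; exponent exactly $-1$ is forced only for $\sigma=\mathrm{id}$, where $\mathbf{A}_{\mathrm{id}}=\mathbf{I}$ is constant. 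Your fallback for $X\neq Y$, that leftover terms ``cancel via the same R-matrix identities,'' is unsubstantiated and is also not the mechanism: no cancellation between different permutations is needed. The argument that actually closes this step (and is the one in the paper) is termwise: on the physically relevant domain $x_i\ge y_i$ (total asymmetry), any $\sigma\neq\mathrm{id}$ has some $i$ with $\sigma(i)<i$, hence $y_{\sigma(i)}<y_i\le x_i$, so the exponent $x_i-y_{\sigma(i)}-1$ of $\xi_{\sigma(i)}$ is nonnegative; since all poles coming from $A_{\sigma}^{\pi\nu}$ lie outside the contour $c$, the integrand is analytic in $\xi_{\sigma(i)}$ inside $c$ and the entire term for that $\sigma$ vanishes, including in the case $X=Y$. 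With this replacement step (iii) is correct, and the rest of your proposal coincides with the paper's proof.
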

\begin{remark}
If $\nu = 12\,\cdots\, N$, in other words, all $N$ particles belong to different species and the species are initially arranged in ascending order, then the system is the same as the TASEP with particle-dependent  rates studied in \cite{Rakos-Schutz-2006}. Hence, the transition probability $P_{(Y,\,12\,\cdots \,N)}(X,\,12\,\cdots\, N;t)$ can be expressed as a determinant (see Theorem 1 in \cite{Rakos-Schutz-2006}).
\end{remark}

\begin{remark}
Theorem \ref{1249pm523} partially extends (2.12) in \cite{Lee-2020}. In other words, (\ref{758pm512}) with $b_l=1$ is equal to (2.12) in \cite{Lee-2020} with $p=1$.
\end{remark}

The proofs of Lemma \ref{509pm519} and Theorem \ref{1249pm523} are given in the next section.
\section{Proof of Theorem \ref{1249pm523}}\label{621pm527}
In order to prove that the $(\pi,\nu)$-element of the right-hand side of (\ref{758pm512}) is $P_{(Y,\nu)}(X,\pi;t)$, we should show that the $(\pi,\nu)$-element satisfies its forward equation and the initial condition $P_{(Y,\nu)}(X,\pi;0) = \delta_{X,Y}$.
\subsection{Forward equations}\label{0814pm742}
We first study the two-particle systems, which will be  building-blocks for the formulas for $N$-particle systems.
When $x_1 < x_2 -1$, the forward equations of $P_{(Y,\nu)}(x_1,x_2, \pi;t)$ are straightforward because two particles act as \textit{free} particles. Hence, the forward equations of  $P_{(Y,\nu)}(x_1,x_2, \pi;t)$ are
 expressed as
\begin{equation}\label{446am514}
\begin{aligned}
\frac{d}{dt}\mathbf{P}_Y(x_1,x_2;t) =~& \underbrace{\left[
                                        \begin{array}{cccc}
                                          b_1 & 0 & 0 & 0 \\
                                          0& b_1 & 0 & 0 \\
                                          0 & 0 & b_2 & 0 \\
                                          0 & 0 & 0 & b_2 \\
                                        \end{array}
                                      \right]}_{(r_1)}\mathbf{P}_Y(x_1-1,x_2;t) + \underbrace{\left[
                                        \begin{array}{cccc}
                                          b_1 & 0 & 0 & 0 \\
                                          0& b_2 & 0 & 0 \\
                                          0 & 0 & b_1 & 0 \\
                                          0 & 0 & 0 & b_2 \\
                                        \end{array}
                                      \right]}_{(r_2)}\mathbf{P}_Y(x_1,x_2-1;t) \\[5pt]
                                      &\hspace{1cm} -  \left[
                                        \begin{array}{cccc}
                                          b_1+b_1 & 0 & 0 & 0 \\
                                          0& b_1+b_2 & 0 & 0 \\
                                          0 & 0 & b_2 +b_1& 0 \\
                                          0 & 0 & 0 & b_2+b_2 \\
                                        \end{array}
                                      \right]\mathbf{P}_Y(x_1,x_2;t)
\end{aligned}
\end{equation}
where the derivative of the matrix $\mathbf{P}_Y(x_1,x_2;t)$ on  the left-hand side implies the matrix of the derivatives of elements of $\mathbf{P}_Y(x_1,x_2;t)$. The matrices $(r_1)$ and $(r_2)$ account for \textit{probability current} going in the states $(x_1,x_2,\pi)$ by a particle's jump to the right next site which is  empty. On the other hand, when $x_1= x_2 - 1$, if two particles belong to different species, two particles may swap their positions. For example, if the initial state is $(Y,12)$, the system cannot be at $(X,21)$ at any later time $t$. Hence, the forward equation of  $P_{(Y,12)}(x_1,x_1+1,12;t)$ is
\begin{equation*}
\frac{d}{dt}P_{(Y,12)}(x_1,x_1+1,12;t) = b_1P_{(Y,12)}(x_1-1,x_1+1,12;t) - b_2P_{(Y,12)}(x_1,x_1+1,12;t).
\end{equation*}
On the other hand, $P_{(Y,12)}(x_1,x_1+1,21;t) = 0$ for all $t$ because the model is totally asymmetric. If the initial state is $(Y,21)$, the forward equation of $P_{(Y,21)}(x_1,x_1+1,21;t)$ is
\begin{equation*}
\frac{d}{dt}P_{(Y,21)}(x_1,x_1+1,21;t) = b_2P_{(Y,21)}(x_1-1,x_1+1,21;t) - (b_2+b_1)P_{(Y,21)}(x_1,x_1+1,21;t)
\end{equation*}
and the forward equation of $P_{(Y,21)}(x_1,x_1+1,12;t)$ is
\begin{equation*}
\begin{aligned}
\frac{d}{dt}P_{(Y,21)}(x_1,x_1+1,12;t) = & b_1P_{(Y,21)}(x_1-1,x_1+1,12;t) + b_2P_{(Y,21)}(x_1,x_1+1,21;t) \\
 & - b_2P_{(Y,21)}(x_1,x_1+1,12;t).
\end{aligned}
\end{equation*}
Hence, the forward equations of $P_{(Y,\nu)}(x_1,x_1+1, \pi;t)$ are expressed as
\begin{equation}\label{530pm523}
\begin{aligned}
& \frac{d}{dt}\mathbf{P}_Y(x_1,x_1+1;t)  =  \\[5pt]
 &\hspace{1cm}  \left[
                                        \begin{array}{cccc}
                                          b_1 & 0 & 0 & 0 \\
                                          0& b_1 & 0 & 0 \\
                                          0 & 0 & b_2 & 0 \\
                                          0 & 0 & 0 & b_2 \\
                                        \end{array}
                                      \right]\mathbf{P}_Y(x_1-1,x_1+1;t)+   \underbrace{\left[
                                        \begin{array}{cccc}
                                          0 & 0 & 0 & 0 \\
                                          0& 0 & b_2 & 0 \\
                                          0 & 0 & 0 & 0 \\
                                          0 & 0 & 0 & 0 \\
                                        \end{array}
                                      \right]}_{(B)}\mathbf{P}_Y(x_1,x_1+1;t) \\
 &\hspace{1cm}-   \Vast( \underbrace{\left[
                                        \begin{array}{cccc}
                                          0 & 0 & 0 & 0 \\
                                          0& 0 & 0 & 0 \\
                                          0 & 0 & b_2& 0 \\
                                          0 & 0 & 0 & 0 \\
                                        \end{array}
                                      \right]}_{(C)} + \left[
                                        \begin{array}{cccc}
                                          b_1 & 0 & 0 & 0 \\
                                          0& b_2 & 0 & 0 \\
                                          0 & 0 & b_1& 0 \\
                                          0 & 0 & 0 & b_2 \\
                                        \end{array}
                                      \right]\Vast)\mathbf{P}_Y(x_1,x_1+1;t).
\end{aligned}
\end{equation}
Here, the matrix $(B)$ accounts for \textit{probability current} going in the states $(x_1,x_1+1,12)$ by the species-2 particle's jump from the state $(x_1,x_1+1,21)$. Similarly, the matrix $(C)$ accounts for \textit{probability current} going  out of the states $(x_1,x_1+1,21)$ by species-2 particle's jump to the state $(x_1,x_1+1,12)$.  The equations (\ref{446am514}) and  (\ref{530pm523}) imply that if $\mathbf{U}(x_1,x_2;t)$ is a $4\times 4$ matrix whose elements are functions on $\mathbb{Z}^2 \times [0,\infty)$, then the forward equation of $P_{(Y,\nu)}(x_1,x_2, \pi;t)$ for any $x_1<x_2$ is in the form of the $(\pi, \nu)$-element of
\begin{equation*}
\begin{aligned}
\frac{d}{dt}\mathbf{U}(x_1,x_2;t) =~&\left[
                                        \begin{array}{cccc}
                                          b_1 & 0 & 0 & 0 \\
                                          0& b_1 & 0 & 0 \\
                                          0 & 0 & b_2 & 0 \\
                                          0 & 0 & 0 & b_2 \\
                                        \end{array}
                                      \right]\mathbf{U}(x_1-1,x_2;t) + \left[
                                        \begin{array}{cccc}
                                          b_1 & 0 & 0 & 0 \\
                                          0& b_2 & 0 & 0 \\
                                          0 & 0 & b_1 & 0 \\
                                          0 & 0 & 0 & b_2 \\
                                        \end{array}
                                      \right]\mathbf{U}(x_1,x_2-1;t) \\[5pt]
                                      &\hspace{1cm} -  \left[
                                        \begin{array}{cccc}
                                          b_1+b_1 & 0 & 0 & 0 \\
                                          0& b_1+b_2 & 0 & 0 \\
                                          0 & 0 & b_2 +b_1& 0 \\
                                          0 & 0 & 0 & b_2+b_2 \\
                                        \end{array}
                                      \right]\mathbf{U}(x_1,x_2;t)
\end{aligned}
\end{equation*}
subject to the $(\pi, \nu)$-element of
\begin{equation*}
\begin{aligned}
&\left[
                \begin{array}{cccc}
                  b_1 & 0 & 0 & 0 \\
                  0 & b_2 & 0 & 0 \\
                  0 & 0 & b_1 & 0 \\
                  0 & 0 & 0 & b_2 \\
                \end{array}
              \right]\mathbf{U}(x_1,x_1;t) =  \left[
                \begin{array}{cccc}
                  b_1 & 0 & 0 & 0 \\
                  0 & b_1 & b_2 & 0 \\
                  0 & 0 &  0  & 0 \\
                  0 & 0 & 0 & b_2 \\
                \end{array}
              \right]    \mathbf{U}(x_1,x_1+1;t).
              \end{aligned}
\end{equation*}
Now, we extend the argument for  two-particle systems to $N$-particle systems. The matrices $(r_1)$ and $(r_2)$ in (\ref{446am514}) for two-particle systems are generalized to
  \begin{equation*}
\mathbf{r}_i = \underbrace{\mathbf{I}_N ~\otimes~ \cdots~\otimes~ \mathbf{I}_N}_{i-1}~ \otimes~ \mathbf{r}~
 ~\otimes~ \underbrace{\mathbf{I}_N ~\otimes~ \cdots ~\otimes~\mathbf{I}_N}_{N-i},~~~~~i=1,\cdots,N
\end{equation*}
where $\mathbf{r}$ is the diagonal matrix,
\begin{equation*}
\mathbf{r} =  \left[
                                                                                            \begin{array}{ccc}
                                                                                              b_1 &  &  \\
                                                                                               & \ddots &  \\
                                                                                               &  & b_N \\
                                                                                            \end{array}
                                                                                          \right].
\end{equation*}
 The matrix $(B)$ in (\ref{530pm523}) is generalized to an $N^2 \times N^2$ matrix $\mathbf{B} = \big[B_{ij,kl}\big]$ with
\begin{equation*}
B_{ij,kl} =
 \begin{cases}
b_j& ~\textrm{if}~~ij=lk~\textrm{with}~~ i<j;\\
0 &~\textrm{for all other cases},
\end{cases}
\end{equation*} and let
\begin{equation*}
\mathbf{B}_i =  \underbrace{\mathbf{I}_N ~\otimes~ \cdots~\otimes~ \mathbf{I}_N}_{i-1}~ \otimes ~\mathbf{B}~\otimes ~\underbrace{\mathbf{I}_N~ \otimes~ \cdots ~\otimes~\mathbf{I}_N}_{N-i-1}.
\end{equation*}
The matrix $(C)$ in (\ref{530pm523}) is generalized to $N^2 \times N^2$ matrix $\mathbf{C} = \big[C_{ij,kl}\big]$ with
\begin{equation*}
C_{ij,kl} =
 \begin{cases}
b_i& ~\textrm{if}~~ij=kl~\textrm{with}~~ i<j;\\
0 &~\textrm{for all other cases},
\end{cases}
\end{equation*} and let
\begin{equation*}
\mathbf{C}_i =  \underbrace{\mathbf{I}_N ~\otimes~ \cdots~\otimes~ \mathbf{I}_N}_{i-1}~ \otimes ~\mathbf{C}~\otimes ~\underbrace{\mathbf{I}_N~ \otimes~ \cdots ~\otimes~\mathbf{I}_N}_{N-i-1}.
\end{equation*}
All forward equations of $P_{(Y,\nu)}(x_1,\cdots,x_N, \pi;t)$ may be expressed as a matrix equation. For example, if $x_i < x_{i+1}-1$ for all $i$, then the forward equation of $P_{(Y,\nu)}(x_1,\cdots,x_N, \pi;t)$ is the $(\pi,\nu)$-element of
\begin{equation}\label{332pm529}
\begin{aligned}
 \frac{d}{dt}\mathbf{P}_Y(x_1,\cdots, x_N;t) =~& \mathbf{r}_1\mathbf{P}_Y(x_1-1,x_2,\cdots,x_N;t) + \cdots  \\
&\hspace{0.5cm}+ \mathbf{r}_N\mathbf{P}_Y(x_1,\cdots,x_{N-1},x_N-1;t) - (\mathbf{r}_1+ \cdots +\mathbf{r}_N)\mathbf{P}_Y(x_1,\cdots, x_N;t)
\end{aligned}
\end{equation}
and if $x=x_i = x_{i+1}-1$ and $x_j < x_{j+1}-1$ for all $j\neq i$,
\begin{equation}\label{333pm529}
\begin{aligned}
& \frac{d}{dt}\mathbf{P}_Y(x_1,\cdots,x_{i-1},x,x+1,x_{i+2},\cdots,x_N;t) = \\
&\hspace{0.2cm}+ \sum_{j \neq i+1} \mathbf{r}_{j}\mathbf{P}_Y(x_1,\cdots,x_{j-1},x_j-1, x_{j+1},\cdots,x_N;t)+ \mathbf{B}_i\mathbf{P}_Y(x_1,\cdots,x_{i-1},x,x+1,x_{i+2},\cdots,x_N;t)  \\
&\hspace{0.2cm}  -\sum_{j \neq i} \mathbf{r}_{j}\mathbf{P}_Y(x_1,\cdots,x_{j-1},x,x+1,x_{j+2},\cdots,x_N;t) - \mathbf{C}_i\mathbf{P}_Y(x_1,\cdots,x_{i-1},x,x+1,x_{i+2},\cdots,x_N;t).
\end{aligned}
\end{equation}
For other configurations of $(x_1,\cdots, x_N)$, the form of the matrix of the forward equations may be different from (\ref{332pm529}) and (\ref{333pm529}). However, as in other Bethe Ansatz applicable models, if $\mathbf{U}(x_1,\cdots,x_N;t) = \big[U_{\pi\nu}(x_1,\cdots,x_N;t)\big]$ is an $N^N \times N^N$ matrix whose elements $U_{\pi\nu}(x_1,\cdots,x_N;t)$ are functions on $\mathbb{Z}^N \times [0,\infty)$, then   the forward equation of $P_{(Y,\nu)}(x_1,\cdots, x_N,\pi;t)$ for any $x_1<\cdots <x_N$ is in the form of the $(\pi,\nu)$-element of
\begin{equation}\label{832pm515}
\begin{aligned}
\frac{d}{dt}\mathbf{U}(x_1,\cdots,x_N;t) = ~& \sum_{j=1}^N \mathbf{r}_{j}\mathbf{U}(x_1,\cdots,x_{j-1},x_j-1, x_{j+1},\cdots,x_N;t)  -(\mathbf{r}_1+ \cdots +\mathbf{r}_N)\mathbf{U}(x_1,\cdots,x_N;t)
\end{aligned}
\end{equation}
subject to the $(\pi,\nu)$-element of
\begin{equation}\label{1019pm515}
\begin{aligned}
&\mathbf{r}_{i+1}\mathbf{U}(x_1,\cdots,x_{i-1},x_i,x_i,x_{i+2},\cdots,x_N;t) = \\
& \hspace{2.5cm}(\mathbf{B}_i + \mathbf{r}_i - \mathbf{C}_i)\mathbf{U}(x_1,\cdots,x_{i-1},x_i,x_i+1,x_{i+2},\cdots,x_N;t)
\end{aligned}
\end{equation}
for all $i=1,\cdots, N-1$.
\subsection{Solutions of the forward equations via Bethe Ansatz}
The $(\pi,\nu)$-element of (\ref{832pm515})  is
\begin{equation*}
\begin{aligned}
&\frac{d}{dt}{U}_{\pi\nu}(x_1,\cdots,x_N;t) = \\
& \hspace{1.5cm} \sum_{j=1}^N b_{\pi(j)}U_{\pi\nu}(x_1,\cdots,x_{j-1},x_j-1, x_{j+1},\cdots,x_N;t) -\big(b_{\pi(1)}+\cdots +b_{\pi(N)}\big)U_{\pi\nu}(x_1,\cdots,x_N;t).
\end{aligned}
\end{equation*}
Assume the separation of variables to write ${U}_{\pi\nu}(x_1,\cdots,x_N;t) = {U}_{\pi\nu}(x_1,\cdots,x_N)T(t)$. Then, the equation of the spatial variables is
\begin{equation}\label{156am56}
\begin{aligned}
&  \varepsilon {U}_{\pi\nu}(x_1,\cdots,x_N) =   b_{\pi(1)} {U}_{\pi\nu}(x_1-1,x_2,\cdots,x_N) + \cdots   \\ & \hspace{2cm}+ b_{\pi(N)}{U}_{\pi\nu}(x_1,\cdots,x_{N-1},x_N-1) -\,\big(b_{\pi(1)} + \cdots + b_{\pi(N)}\big) {U}_{\pi\nu}(x_1,\cdots, x_N)
\end{aligned}
\end{equation}
for some constant $\varepsilon$ with respect to $t,x_1,\cdots, x_N$.
Then, we observe that for any  $\sigma \in \mathcal{S}_N$,
\begin{equation*}
\prod_{i=1}^N\Big(b_{\pi(i)}\xi_{\sigma(i)}\Big)^{x_i},~~~~\xi_1,\cdots,\xi_N \in \mathbb{C}
\end{equation*}
solves (\ref{156am56}) if and only if
\begin{equation}\label{219am56}
\varepsilon = \frac{1}{\xi_1} + \cdots + \frac{1}{\xi_N} - \big(b_{\pi(1)} + \cdots + b_{\pi(N)}\big).
\end{equation}
Based on the observation in the above, assume that the matrix $\mathbf{U}(x_1,\cdots,x_N;t)$ is invertible and it is decomposed as
\begin{equation*}
\mathbf{U}(x_1,\cdots,x_N;t) = \mathbf{J}(t)\mathbf{U}(x_1,\cdots,x_N)
\end{equation*}
where $\mathbf{J}(t) = \big[J_{\pi\nu}(t) \big] $ is an
$N^N \times N^N$ diagonal matrix where $J_{\pi\pi}(t)$ are functions of time only. Hence, from (\ref{832pm515}), we obtain
\begin{equation}\label{251am56}
\begin{aligned}
\mathbf{J}(t)^{-1}\bigg(\frac{d}{dt}\mathbf{J}(t)\bigg) =~& \Big(\mathbf{r}_1\mathbf{U}(x_1-1,x_2,\cdots,x_N) + \cdots  + \mathbf{r}_N\mathbf{U}(x_1,\cdots,x_{N-1},x_N-1) \\
&\hspace{1cm}
- (\mathbf{r}_1+ \cdots +\mathbf{r}_N)\mathbf{U}(x_1,\cdots, x_N)\Big) \mathbf{U}(x_1,\cdots,x_N)^{-1}.
\end{aligned}
\end{equation}
Both sides of (\ref{251am56}) must be a diagonal matrix $\mathbf{E}= \big[\varepsilon_{\pi\pi}\big] $ whose elements are some constants with respect to $t,x_1,\cdots, x_N$. Thus, we obtain the matrix equation for spatial variables
\begin{equation}\label{1047pm56}
\begin{aligned}
\mathbf{E}\mathbf{U}(x_1,\cdots,x_N) = ~& \mathbf{r}_1\mathbf{U}(x_1-1,x_2,\cdots,x_N) + \cdots  + \mathbf{r}_N\mathbf{U}(x_1,\cdots,x_{N-1},x_N-1) \\
&\hspace{1cm}
- (\mathbf{r}_1+ \cdots +\mathbf{r}_N)\mathbf{U}(x_1,\cdots, x_N)
\end{aligned}
\end{equation}
and the matrix equation for time variable
\begin{equation*}
\frac{d}{dt}\mathbf{J}(t) = \mathbf{J}(t)\mathbf{E} = \mathbf{E} \mathbf{J}(t).
\end{equation*}
\begin{lemma}\label{349pm57}
Let $\mathbf{D}(x_1,\cdots,x_N) = \big[D_{\pi\nu}(x_1,\cdots, x_N)\big]$ be the $N^N \times N^N$ diagonal matrix with
\begin{equation*}
D_{\pi\pi}(x_1,\cdots, x_N)=b_{\pi(1)}^{x_1}\cdots b_{\pi(N)}^{x_N}.
\end{equation*}
Then,  for any $\sigma \in \mathcal{S}_N$,
\begin{equation}\label{408pm57}
\mathbf{U}(x_1,\cdots,x_N) = \mathbf{D}(x_1,\cdots,x_N)\mathbf{A}\prod_{i=1}^N\xi_{\sigma(i)}^{x_i}
\end{equation}
where $\mathbf{A}$ is an arbitrary invertible $N^N \times N^N$ matrix whose elements are constants with respect to $x_1,\cdots, x_N$ is a solution of  (\ref{1047pm56}) if and only if  $\varepsilon_{\pi\pi}$ is given by
\begin{equation}\label{900pm525}
\varepsilon_{\pi\pi} = \frac{1}{\xi_1} + \cdots +\frac{1}{\xi_N} - \big(b_{\pi(1)} + \cdots + b_{\pi(N)}\big).
\end{equation}
\end{lemma}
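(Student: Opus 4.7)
The plan is to substitute the ansatz directly into the eigenvalue equation (\ref{1047pm56}) and exploit the fact that every matrix in sight ($\mathbf{E}$, $\mathbf{D}(\cdot)$, $\mathbf{r}_j$, and their sums) is diagonal, so the only nontrivial algebra is bookkeeping on the diagonal entries.

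First I would establish the key commutation identity
\begin{equation*}
\mathbf{r}_j\,\mathbf{D}(x_1,\ldots,x_{j-1},x_j-1,x_{j+1},\ldots,x_N)=\mathbf{D}(x_1,\ldots,x_N),
\end{equation*}
which is immediate from the fact that the $(\pi,\pi)$-entry of $\mathbf{r}_j$ is $b_{\pi(j)}$ and that of $\mathbf{D}(\ldots,x_j-1,\ldots)$ is $b_{\pi(1)}^{x_1}\cdots b_{\pi(j)}^{x_j-1}\cdots b_{\pi(N)}^{x_N}$; the factor $b_{\pi(j)}$ just absorbs the missing power. Plugging the ansatz (\ref{408pm57}) into the $j$-th term on the right of (\ref{1047pm56}) and pulling out the scalar $\xi_{\sigma(j)}^{-1}$ from the shift in $x_j$, this identity turns $\mathbf{r}_j\mathbf{U}(\ldots,x_j-1,\ldots)$ into $\mathbf{D}(x_1,\ldots,x_N)\mathbf{A}\,\xi_{\sigma(j)}^{-1}\prod_i \xi_{\sigma(i)}^{x_i}$.

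Summing over $j=1,\ldots,N$ and using that $\sum_j \xi_{\sigma(j)}^{-1}=\sum_k \xi_k^{-1}$ (rearrangement), the inflow term becomes the scalar $\bigl(\sum_k \xi_k^{-1}\bigr)\mathbf{U}(x_1,\ldots,x_N)$. The outflow term is $(\mathbf{r}_1+\cdots+\mathbf{r}_N)\mathbf{U}(x_1,\ldots,x_N)$, and since $\mathbf{r}_1+\cdots+\mathbf{r}_N$ and $\mathbf{D}$ are both diagonal they commute, so this equals $\mathbf{D}(x_1,\ldots,x_N)(\mathbf{r}_1+\cdots+\mathbf{r}_N)\mathbf{A}\prod_i\xi_{\sigma(i)}^{x_i}$. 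Equation (\ref{1047pm56}) therefore reduces, after cancelling the common scalar $\prod_i\xi_{\sigma(i)}^{x_i}$ and left-multiplying by $\mathbf{D}(x_1,\ldots,x_N)^{-1}$ (which commutes past $\mathbf{E}$ because both are diagonal), to
\begin{equation*}
\bigl(\mathbf{E}+\mathbf{r}_1+\cdots+\mathbf{r}_N\bigr)\mathbf{A}=\Bigl(\tfrac{1}{\xi_1}+\cdots+\tfrac{1}{\xi_N}\Bigr)\mathbf{A}.
\end{equation*}

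Since $\mathbf{A}$ is invertible, multiplying on the right by $\mathbf{A}^{-1}$ gives $\mathbf{E}=\bigl(\sum_k \xi_k^{-1}\bigr)\mathbf{I}_{N^N}-(\mathbf{r}_1+\cdots+\mathbf{r}_N)$, and reading off the $(\pi,\pi)$-diagonal entry yields exactly (\ref{900pm525}). Conversely, if $\varepsilon_{\pi\pi}$ is given by (\ref{900pm525}) then the chain of equalities runs the other way and (\ref{1047pm56}) is satisfied, establishing the \emph{if and only if}. There is no real obstacle here beyond keeping track of which objects are diagonal (and therefore commute through $\mathbf{D}$ and $\mathbf{E}$) and which are arbitrary (namely $\mathbf{A}$); the invertibility of $\mathbf{A}$ is what upgrades an equality of matrix products to the desired equality of eigenvalue matrices.
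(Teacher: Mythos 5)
Your proof is correct and follows essentially the same route as the paper: substitute the ansatz, use the identity $\mathbf{r}_j\mathbf{D}(\ldots,x_j-1,\ldots)=\mathbf{D}(x_1,\ldots,x_N)$ to absorb the shifts, cancel the scalar $\prod_i\xi_{\sigma(i)}^{x_i}$, and strip off $\mathbf{D}$ and the invertible $\mathbf{A}$ to read off $\mathbf{E}$, with the converse obtained by reversing the steps. Your explicit remark that $\sum_j\xi_{\sigma(j)}^{-1}=\sum_k\xi_k^{-1}$ is a nice touch that the paper leaves implicit, but the argument is the same.
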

\begin{proof}
First, we observe that
\begin{equation*}
\begin{aligned}
\mathbf{r}_i\mathbf{D}(x_1,\cdots,x_{i-1},x_i-1,x_{i+1},\cdots, x_N) = \mathbf{D}(x_1,\cdots,x_N)
\end{aligned}
\end{equation*}
because $\mathbf{r}_i$ is a diagonal matrix whose $(\pi,\pi)$-element is $b_{\pi(i)}$. Also, observe that
\begin{equation*}
\mathbf{D}^{-1}(x_1,\cdots,x_N)= \mathbf{D}(x_1^{-1},\cdots,x_N^{-1}),
\end{equation*}
and
\begin{equation*}
\mathbf{D}(x_1,\cdots,x_N)\mathbf{D}(y_1,\cdots,y_N) = \mathbf{D}(x_1+y_1,\cdots,x_N+y_N).
\end{equation*}
Now, we prove the statement.
Suppose that (\ref{408pm57}) is a solution of (\ref{1047pm56}). Substituting (\ref{408pm57}) into (\ref{1047pm56}) and then dividing both sides by $\prod_{i=1}^N\xi_{\sigma(i)}^{x_i}$, then
\begin{equation*}
\begin{aligned}
 \mathbf{E}\mathbf{D}(x_1,\cdots, x_N)\mathbf{A} =~& \mathbf{r}_1\mathbf{D}(x_1-1,x_2,\cdots, x_N)\mathbf{A}\xi_{\sigma(1)}^{-1} + \cdots \\
& \hspace{1cm} + \mathbf{r}_N\mathbf{D}(x_1,\cdots,x_{N-1}, x_N-1)\mathbf{A}\xi_{\sigma(N)}^{-1}  - (\mathbf{r}_1+ \cdots + \mathbf{r}_N)\mathbf{D}(x_1,\cdots,x_N)\mathbf{A} \\
=~&\mathbf{D}(x_1,\cdots, x_N)\mathbf{A}\xi_{\sigma(1)}^{-1} + \cdots + \mathbf{D}(x_1,\cdots, x_N)\mathbf{A}\xi_{\sigma(N)}^{-1} \\
&\hspace{1cm} -  (\mathbf{r}_1+ \cdots + \mathbf{r}_N)\mathbf{D}(x_1,\cdots,x_N)\mathbf{A}.
\end{aligned}
\end{equation*}
Multiplying by $\mathbf{A}^{-1}\mathbf{D}^{-1}(x_1,\cdots, x_N)$ on both side, we obtain
\begin{equation*}
 \mathbf{E} = \Big(\tfrac{1}{\xi_{\sigma(1)}} +\cdots +  \tfrac{1}{\xi_{\sigma(N)}}\Big)\mathbf{I}_{N^N} - (\mathbf{r}_1+ \cdots + \mathbf{r}_N),
\end{equation*}
and thus, the $(\pi,\pi)$-element of $\mathbf{E}$ is given by (\ref{900pm525}). The second part of the proof can be done via the reverse way of the first part of the proof.
\end{proof}
The previous lemma implies that  the general solution of (\ref{1047pm56}) is given by
\begin{equation}\label{1017pm515}
\mathbf{U}(x_1,\cdots,x_N) = \sum_{\sigma \in \mathcal{S}_N}\mathbf{D}(x_1,\cdots,x_N)\mathbf{A}_{\sigma}\prod_{i=1}^N\xi_{\sigma(i)}^{x_i}.
\end{equation}
\subsection{Boundary conditions}
Now, (\ref{1017pm515}) should satisfy the spatial part of the \textit{boundary condition} (\ref{1019pm515}), that is,
\begin{equation}\label{1019pm516}
\begin{aligned}
&\mathbf{r}_{i+1}\mathbf{U}(x_1,\cdots,x_{i-1},x_i,x_i,x_{i+2},\cdots,x_N) = \\
&\hspace{2cm}(\mathbf{B}_i + \mathbf{r}_i - \mathbf{C}_i)\mathbf{U}(x_1,\cdots,x_{i-1},x_i,x_i+1,x_{i+2},\cdots,x_N)
\end{aligned}
\end{equation}
for $i=1,\cdots, N-1$. Extending the technique used in \cite{Lee-2020}, we will find the formulas of $\mathbf{A}_{\sigma}$ in (\ref{1017pm515}) which satisfy (\ref{1019pm516}).
 Define an $N \times N$ diagonal matrix,
\begin{equation*}
\mathbf{r}^{x_i}:=\left[
                               \begin{array}{ccc}
                                 b_1^{x_i} &  &  \\
                                  & \ddots &  \\
                                  &  & b_N^{x_i} \\
                               \end{array}
                             \right]
\end{equation*}
and recall the definition of $\mathbf{R}_{\beta\alpha}$ in (\ref{625pm724}). Then, we observe that
\begin{equation*}
\begin{aligned}
\mathbf{R}_{\beta\alpha}= -\Big[\big(\mathbf{I}_{N^2} -  \xi_{\alpha}(\mathbf{B} + \mathbf{r} \otimes \mathbf{I}_N - \mathbf{C})\big)\big(\mathbf{r}^{x_i} \otimes \mathbf{r}^{x_i+1}\big)\Big]^{-1}\Big[\big(\mathbf{I}_{N^2} -  \xi_{\beta}(\mathbf{B} + \mathbf{r} \otimes \mathbf{I}_N - \mathbf{C})\big)\big(\mathbf{r}^{x_i} \otimes \mathbf{r}^{x_i+1}\big)\Big].
\end{aligned}
\end{equation*}
\begin{lemma}\label{325am519}
If
\begin{equation}\label{1225am519}
\mathbf{A}_{T_i\sigma} = \Big(\mathbf{I}_{N^{i-1}} \otimes~\mathbf{R}_{\sigma(i+1)\sigma(i)} \otimes \mathbf{I}_{N^{N-i-1}} \Big)\mathbf{A}_{\sigma}
\end{equation}
for all even permutations $\sigma$ and $i=1,\cdots, N-1$,  then (\ref{1019pm516}) is satisfied.
\end{lemma}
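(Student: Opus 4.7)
The plan is to substitute the Bethe-Ansatz form (\ref{1017pm515}) into the boundary condition (\ref{1019pm516}) and show the resulting expression vanishes by pairing permutations under the transposition $T_i$. First, exploit the fact that $\mathbf{D}$ is a tensor product of diagonal matrices $\mathbf{r}^{x_j}$ and $\mathbf{r}_{i+1}$ acts as $\mathbf{r}$ on the $(i+1)$-st slot, giving the identity $\mathbf{r}_{i+1}\mathbf{D}(x_1,\ldots,x_i,x_i,x_{i+2},\ldots,x_N) = \mathbf{D}(x_1,\ldots,x_i,x_i+1,x_{i+2},\ldots,x_N)$. Denote this common matrix by $\mathbf{D}$ and set $K_i := \mathbf{B}_i+\mathbf{r}_i-\mathbf{C}_i$. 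Collecting terms, the boundary condition collapses to
\begin{equation*}
\sum_{\sigma\in\mathcal{S}_N}\bigl[\mathbf{I}_{N^N}-\xi_{\sigma(i+1)}K_i\bigr]\mathbf{D}\,\mathbf{A}_\sigma\,\xi_{\sigma(i+1)}^{x_i}\prod_{j\neq i+1}\xi_{\sigma(j)}^{x_j} = 0.
\end{equation*}

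Next, partition $\mathcal{S}_N$ into pairs $\{\sigma, T_i\sigma\}$ by letting $\sigma$ range over the even permutations; since $T_i$ is an involution that reverses parity, every element of $\mathcal{S}_N$ appears in exactly one such pair. For a fixed pair, set $\alpha := \sigma(i)$ and $\beta := \sigma(i+1)$, so that $(T_i\sigma)(i)=\beta$, $(T_i\sigma)(i+1)=\alpha$, and the monomials arising from the two permutations share the common factor $\xi_\alpha^{x_i}\xi_\beta^{x_i}\prod_{j\neq i,i+1}\xi_{\sigma(j)}^{x_j}$. Hence the pair's contribution to the sum vanishes if and only if
\begin{equation*}
\bigl[\mathbf{I}_{N^N}-\xi_\alpha K_i\bigr]\mathbf{D}\,\mathbf{A}_{T_i\sigma} + \bigl[\mathbf{I}_{N^N}-\xi_\beta K_i\bigr]\mathbf{D}\,\mathbf{A}_\sigma = 0.
\end{equation*}

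Third, invoke the tensor decompositions $\mathbf{D} = \mathbf{D}_1\otimes W\otimes \mathbf{D}_2$ with $W := \mathbf{r}^{x_i}\otimes\mathbf{r}^{x_i+1}$, and $K_i = \mathbf{I}_{N^{i-1}}\otimes K\otimes \mathbf{I}_{N^{N-i-1}}$ with $K := \mathbf{B}+\mathbf{r}\otimes\mathbf{I}_N-\mathbf{C}$. Substituting the hypothesis $\mathbf{A}_{T_i\sigma} = (\mathbf{I}_{N^{i-1}}\otimes\mathbf{R}_{\beta\alpha}\otimes\mathbf{I}_{N^{N-i-1}})\mathbf{A}_\sigma$ reduces the pairwise identity to the $(i,i+1)$-block equation $(\mathbf{I}_{N^2}-\xi_\alpha K)W\mathbf{R}_{\beta\alpha} = -(\mathbf{I}_{N^2}-\xi_\beta K)W$, which is a direct rearrangement of the $\mathbf{R}_{\beta\alpha}$ formula displayed just before the lemma. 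The main obstacle is controlling the noncommutativity between the diagonal $\mathbf{D}$ and the non-diagonal $K_i$; this is finessed by the fact that $K_i$ acts as the identity on every tensor slot outside $\{i,i+1\}$, so all operator products factor cleanly through the local $(i,i+1)$-block where the $\mathbf{R}$-matrix identity applies verbatim.
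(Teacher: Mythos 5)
Your proposal is correct and follows essentially the same route as the paper's proof: substitute (\ref{1017pm515}) into (\ref{1019pm516}), pair each even $\sigma$ with $T_i\sigma$ so that the sum over $\mathcal{S}_N$ becomes a sum over $\mathcal{A}_N$, and observe that each pair's bracket identity reduces, via the tensor factorization through slots $i,i+1$, to the displayed formula $\mathbf{R}_{\beta\alpha} = -\big[(\mathbf{I}_{N^2}-\xi_{\alpha}(\mathbf{B}+\mathbf{r}\otimes\mathbf{I}_N-\mathbf{C}))(\mathbf{r}^{x_i}\otimes\mathbf{r}^{x_i+1})\big]^{-1}\big[(\mathbf{I}_{N^2}-\xi_{\beta}(\mathbf{B}+\mathbf{r}\otimes\mathbf{I}_N-\mathbf{C}))(\mathbf{r}^{x_i}\otimes\mathbf{r}^{x_i+1})\big]$. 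Your writing of the bracket as $\big[\mathbf{I}_{N^N}-\xi_{\sigma(i+1)}K_i\big]\mathbf{D}$ is just a compact form of the paper's longhand tensor expressions, so there is no substantive difference.
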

\begin{proof}
First, we note that
  \begin{equation*}
  \begin{aligned}
& \mathbf{r}_{i+1} = \underbrace{\mathbf{I}_N ~\otimes ~\cdots~\otimes~ \mathbf{I}_N}_{i-1} ~\otimes~ ( \mathbf{I}_N ~\otimes~ \mathbf{r}) ~\otimes~ \underbrace{\mathbf{I}_N ~\otimes~ \cdots ~\otimes~\mathbf{I}_N}_{N-i-1}, \\
& \mathbf{B}_i + \mathbf{r}_i - \mathbf{C}_i =  \underbrace{\mathbf{I}_N ~\otimes ~\cdots~\otimes~ \mathbf{I}_N}_{i-1} ~\otimes~ (\mathbf{B} + \mathbf{r}~ \otimes~ \mathbf{I}_N - \mathbf{C})
 ~\otimes~ \underbrace{\mathbf{I}_N ~\otimes~ \cdots ~\otimes~\mathbf{I}_N}_{N-i-1},
\end{aligned}
\end{equation*}
and
\begin{equation*}
 \mathbf{D}(x_1,\cdots,x_N) =\mathbf{r}^{x_1}~ \otimes ~\cdots~ \otimes ~\mathbf{r}^{x_N}.
\end{equation*}
Substituting (\ref{1017pm515}) into (\ref{1019pm516}), we obtain
\begin{equation}\label{1122pm518}
\begin{aligned}
& \sum_{\sigma \in \mathcal{S}_N}\Big[\big(\mathbf{r}^{x_1}~ \otimes ~\cdots~\otimes~ \mathbf{r}^{x_{i-1}} ~\otimes ~ \big(\mathbf{r}^{x_i} ~\otimes~ \mathbf{r}^{x_i+1}\big) ~\otimes~ \mathbf{r}^{x_{i+2}}~\otimes ~\cdots~ \otimes~ \mathbf{r}^{x_N} \big)~ - \\
&\hspace{0.5cm}\big(\mathbf{r}^{x_1}~ \otimes ~\cdots~\otimes~ \mathbf{r}^{x_{i-1}} ~\otimes~ \big(\mathbf{B} + \mathbf{r} ~\otimes~ \mathbf{I}_N - \mathbf{C}\big)\big(\mathbf{r}^{x_i} ~\otimes~ \mathbf{r}^{x_i+1}\big)~ \otimes~ \mathbf{r}^{x_{i+2}}~\otimes ~\cdots~ \otimes~ \mathbf{r}^{x_N} \big)\xi_{\sigma(i+1)}\Big] \\
&\hspace{2cm}\times \mathbf{A}_{\sigma}\xi_{\sigma(i)}^{x_i}\xi_{\sigma(i+1)}^{x_i}\prod_{j\neq i,\,i+1}\xi_{\sigma(j)}^{x_j} = \mathbf{0}.
\end{aligned}
\end{equation}
If we express  (\ref{1122pm518}) as a sum over the alternating group $\mathcal{A}_N$
\begin{equation}\label{1204am519}
\begin{aligned}
& \sum_{\sigma \in \mathcal{A}_N}\bigg(\Big[\big(\mathbf{r}^{x_1}~ \otimes ~\cdots~\otimes \mathbf{r}^{x_{i-1}} \otimes \big(\mathbf{r}^{x_i} \otimes \mathbf{r}^{x_i+1}\big) \otimes \mathbf{r}^{x_{i+2}}\otimes \cdots \otimes \mathbf{r}^{x_N} \big) - \\
&\hspace{1.5cm}\big(\mathbf{r}^{x_1}~ \otimes ~\cdots~\otimes \mathbf{r}^{x_{i-1}} \otimes \big(\mathbf{B} + \mathbf{r} \otimes \mathbf{I}_N - \mathbf{C}\big)\big(\mathbf{r}^{x_i} \otimes \mathbf{r}^{x_i+1}\big) \otimes \mathbf{r}^{x_{i+2}}\otimes \cdots \otimes \mathbf{r}^{x_N} \big)\xi_{\sigma(i+1)}\Big]  \mathbf{A}_{\sigma}  \\[5pt]
&\hspace{0.5cm} + \Big[\big(\mathbf{r}^{x_1}~ \otimes ~\cdots~\otimes \mathbf{r}^{x_{i-1}} \otimes \big(\mathbf{r}^{x_i} \otimes \mathbf{r}^{x_i+1}\big) \otimes \mathbf{r}^{x_{i+2}}\otimes \cdots \otimes \mathbf{r}^{x_N} \big) - \\[5pt]
&\hspace{1.5cm}\big(\mathbf{r}^{x_1}~ \otimes ~\cdots~\otimes \mathbf{r}^{x_{i-1}} \otimes \big(\mathbf{B} + \mathbf{r} \otimes \mathbf{I}_N - \mathbf{C}\big)\big(\mathbf{r}^{x_i} \otimes \mathbf{r}^{x_i+1}\big) \otimes \mathbf{r}^{x_{i+2}}\otimes \cdots \otimes \mathbf{r}^{x_N} \big)\xi_{\sigma(i)}\Big]  \mathbf{A}_{T_i\sigma}\bigg)\\
&\hspace{0.5cm}\times~ \xi_{\sigma(i)}^{x_i}\xi_{\sigma(i+1)}^{x_i}\prod_{j\neq i,\,i+1}\xi_{\sigma(j)}^{x_j} = \mathbf{0}.
\end{aligned}
\end{equation}
But, (\ref{1204am519}) is satisfied  if
\begin{equation*}
\begin{aligned}
&\Big[\big(\mathbf{r}^{x_1}~ \otimes ~\cdots~\otimes \mathbf{r}^{x_{i-1}} \otimes \big(\mathbf{r}^{x_i} \otimes \mathbf{r}^{x_i+1}\big) \otimes \mathbf{r}^{x_{i+2}}\otimes \cdots \otimes \mathbf{r}^{x_N} \big) - \\
&\hspace{1.5cm}\big(\mathbf{r}^{x_1}~ \otimes ~\cdots~\otimes \mathbf{r}^{x_{i-1}} \otimes \big(\mathbf{B} + \mathbf{r} \otimes \mathbf{I}_N - \mathbf{C}\big)\big(\mathbf{r}^{x_i} \otimes \mathbf{r}^{x_i+1}\big) \otimes \mathbf{r}^{x_{i+2}}\otimes \cdots \otimes \mathbf{r}^{x_N} \big)\xi_{\sigma(i+1)}\Big]  \mathbf{A}_{\sigma}  \\[5pt]
= ~~& -~\Big[\big(\mathbf{r}^{x_1}~ \otimes ~\cdots~\otimes \mathbf{r}^{x_{i-1}} \otimes \big(\mathbf{r}^{x_i} \otimes \mathbf{r}^{x_i+1}\big) \otimes \mathbf{r}^{x_{i+2}}\otimes \cdots \otimes \mathbf{r}^{x_N} \big) - \\[5pt]
&\hspace{1.5cm}\big(\mathbf{r}^{x_1}~ \otimes ~\cdots~\otimes \mathbf{r}^{x_{i-1}} \otimes \big(\mathbf{B} + \mathbf{r} \otimes \mathbf{I}_N - \mathbf{C}\big)\big(\mathbf{r}^{x_i} \otimes \mathbf{r}^{x_i+1}\big) \otimes \mathbf{r}^{x_{i+2}}\otimes \cdots \otimes \mathbf{r}^{x_N} \big)\xi_{\sigma(i)}\Big]  \mathbf{A}_{T_i\sigma}
\end{aligned}
\end{equation*}
for each even permutation $\sigma$, which is equivalent to  (\ref{1225am519}).
\end{proof}
 In fact,  (\ref{310am36}) and (\ref{305am519}) implies that the assumption of Lemma \ref{325am519} is satisfied, hence (\ref{1017pm515}) with $\mathbf{A}_{\sigma}$ in (\ref{305am519}) satisfies (\ref{1019pm516}).
\subsection{Consistency relations - Proof of Lemma \ref{509pm519}}
Lemma \ref{509pm519}  confirms that  the multi-species TASEP is \textit{integrable} even when the rates are species-dependent.
\subsubsection{Proof of Lemma \ref{509pm519} (a)} It suffices to show that
\begin{equation*}
(\mathbf{R}_{\alpha\beta} ~\otimes~ \mathbf{I}_N ~\otimes~ \mathbf{I}_N)(\mathbf{I}_N ~\otimes~ \mathbf{I}_N ~\otimes~ \mathbf{R}_{\gamma\delta}) = (\mathbf{I}_N ~\otimes~ \mathbf{I}_N ~\otimes~ \mathbf{R}_{\gamma\delta})(\mathbf{R}_{\alpha\beta} ~\otimes~ \mathbf{I}_N ~\otimes~ \mathbf{I}_N).
\end{equation*}
This equality obviously holds because both sides are equal to $\mathbf{R}_{\alpha\beta} ~\otimes~\mathbf{R}_{\gamma\delta}$.\qed
\subsubsection{Proof of Lemma \ref{509pm519} (b) - Yang-Baxter equation}
 It suffices to show
\begin{equation}\label{754pm519}
(\mathbf{R}_{\gamma\beta} ~\otimes~ \mathbf{I}_N)(\mathbf{I}_N ~\otimes~ \mathbf{R}_{\gamma\alpha})(\mathbf{R}_{\beta\alpha} ~\otimes~ \mathbf{I}_N) = (\mathbf{I}_N ~\otimes~ \mathbf{R}_{\beta\alpha})(\mathbf{R}_{\gamma\alpha} ~\otimes ~ \mathbf{I}_N)(\mathbf{I}_N ~\otimes~ \mathbf{R}_{\gamma\beta}).
\end{equation}
  If we re-arrange the columns and the rows of the $N^3 \times N^3$ matrices in (\ref{754pm519}) so that all their labels from the same multi-set $[i,j,k]$ are grouped together, then the matrices in (\ref{754pm519}) become  block-diagonal (See Figure \ref{3400am48}).
  \begin{figure}[H]
 \centering
\includegraphics[width=13cm]{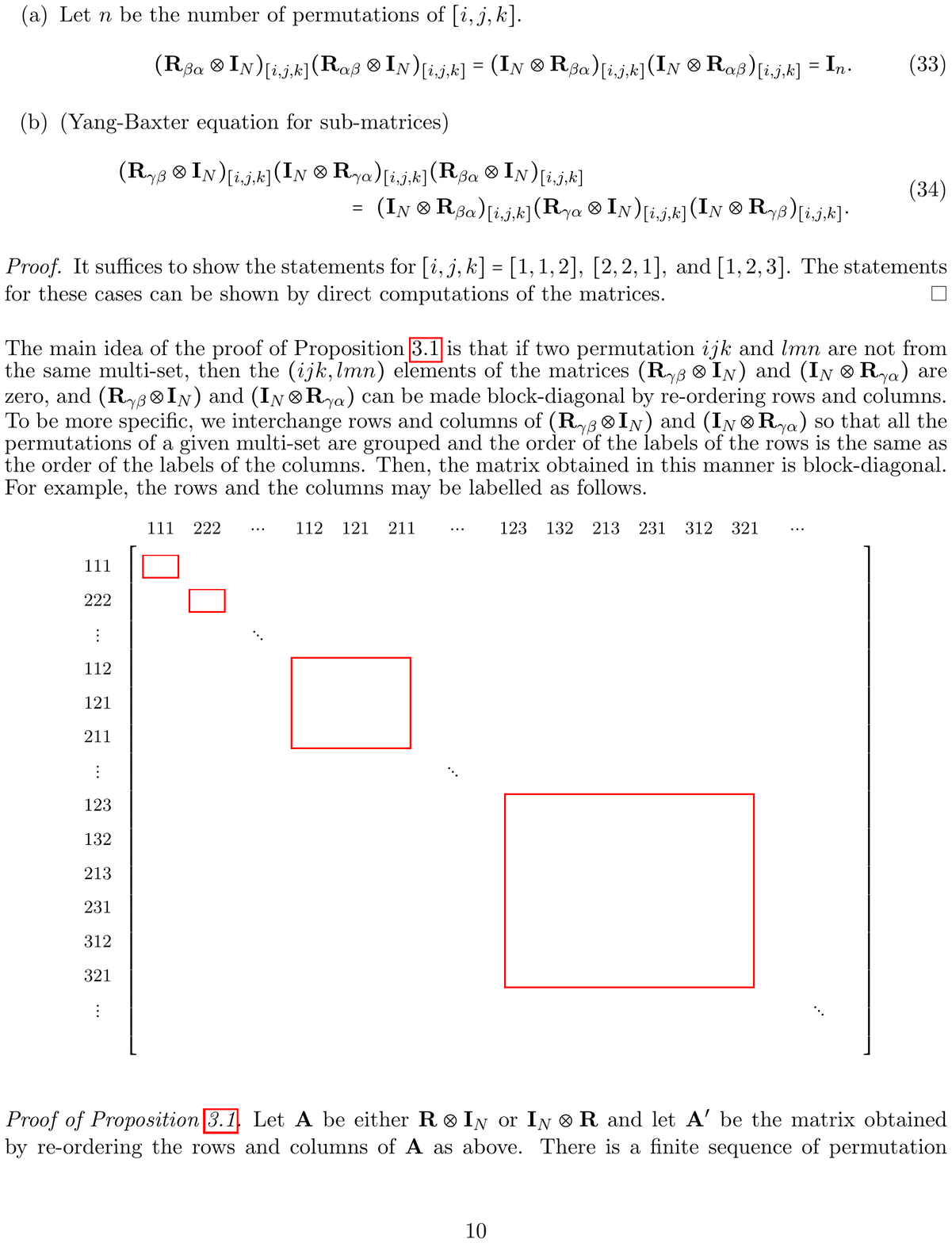}
\caption{\footnotesize{Form of the $N^3 \times N^3$ matrices in (\ref{754pm519}) after re-ordering rows and columns}}
\label{3400am48}
\end{figure}
Let $(\mathbf{R} ~\,\otimes\,~ \mathbf{I}_N)_{[i,j,k]}$  be the sub-matrix of $(\mathbf{R} ~\,\otimes\,~ \mathbf{I}_N)$ whose rows and columns are labelled by the permutations of the multi-set $[i,j,k]$, and similarly, we define $(\mathbf{I}_N\,\otimes\,~ \mathbf{R})_{[i,j,k]}$. Then, in order to show (\ref{754pm519}), it suffices to show
\begin{equation}\label{844pm519}
\begin{aligned}
&(\mathbf{R}_{\gamma\beta} ~\otimes~ \mathbf{I}_N)_{[i,j,k]}(\mathbf{I}_N ~\otimes ~ \mathbf{R}_{\gamma\alpha})_{[i,j,k]}(\mathbf{R}_{\beta\alpha} ~\otimes~ \mathbf{I}_N)_{[i,j,k]} = \\
 &\hspace{2cm}(\mathbf{I}_N ~\otimes~ \mathbf{R}_{\beta\alpha})_{[i,j,k]}(\mathbf{R}_{\gamma\alpha} ~\otimes ~ \mathbf{I}_N)_{[i,j,k]}(\mathbf{I}_N ~\otimes ~\mathbf{R}_{\gamma\beta})_{[i,j,k]}
\end{aligned}
\end{equation}
for each multi-set $[i,j,k]$ whose elements are from $\{1,2,3\}$ because all matrices are block-diagonal matrices in the same form. If $i=j=k$, (\ref{844pm519}) is equivalent to
\begin{equation*}
 S_{\gamma\beta}(i)S_{\gamma\alpha}(i)  S_{\beta\alpha} (i) =   S_{\beta\alpha}(i)  S_{\gamma\alpha} (i) S_{\gamma\beta}(i),
\end{equation*}
which is trivially true. If $i =j >k$, then
(\ref{844pm519}) is equivalent to
\begin{equation*}
\begin{aligned}
\left[
  \begin{array}{ccc}
    S_{\gamma\beta}(k) & T_{\gamma\beta}(k) & 0 \\
    0 &-1  & 0 \\
    0 & 0 &  S_{\gamma\beta}(i) \\
  \end{array}
\right]\left[
         \begin{array}{ccc}
           S_{\gamma\alpha}(i) & 0 & 0 \\
           0 & S_{\gamma\alpha}(k) & T_{\gamma\alpha}(k) \\
           0 & 0 & -1 \\
         \end{array}
       \right]
\left[
  \begin{array}{ccc}
    S_{\beta\alpha} (k) & T_{\beta\alpha} (k) & 0 \\
    0 & -1 & 0 \\
    0 & 0 & S_{\beta\alpha} (i) \\
  \end{array}
\right] \\[5pt]
= \left[
         \begin{array}{ccc}
           S_{\beta\alpha}(i) & 0 & 0 \\
           0 & S_{\beta\alpha}(k) & T_{\beta\alpha}(k) \\
           0 & 0 & -1 \\
         \end{array}
       \right]
\left[
  \begin{array}{ccc}
    S_{\gamma\alpha} (k) &T_{\gamma\alpha} (k) & 0 \\
    0 & -1 & 0  \\
    0 & 0 & S_{\gamma\alpha} (i) \\
  \end{array}
\right]  \left[
         \begin{array}{ccc}
           S_{\gamma\beta}(i) & 0 & 0 \\
           0 &S_{\gamma\beta}(k) & T_{\gamma\beta}(k) \\
           0 & 0 &  -1\\
         \end{array}
       \right],
\end{aligned}
\end{equation*}
 which can be easily verified by direct computation. Similarly, the other two cases of (\ref{844pm519}) for $i=j <k$ and for the case that all $i,j,k$ are distinct can be verified by direct computation.\qed
 \subsubsection{Proof of Lemma \ref{509pm519} (c)} It suffices to show that $\mathbf{R}_{\beta\alpha}  \mathbf{R}_{\alpha\beta} = \mathbf{I}_{N^2}$. Let us re-arrange the rows  and the columns  in the same way as in the proof of Lemma \ref{509pm519} (b) to make  $\mathbf{R}_{\beta\alpha} $ and $\mathbf{R}_{\alpha\beta} $ block-diagonal. Then, each block on the diagonal of $\mathbf{R}_{\beta\alpha} $ is either a $1 \times 1$ matrix or a $2 \times 2$ matrix. The $1 \times 1$ sub-matrix of $\mathbf{R}_{\beta\alpha} $ consisting of the row $ii$ and the column $ii$ is $\big[S_{\beta\alpha}(i)\big]$ and the $2 \times 2$ sub-matrix of $\mathbf{R}_{\beta\alpha} $ consisting of the rows $ij, ji$ and the columns  $ij,ji$ with $i<j$ is
     \begin{equation*}
     \left[
       \begin{array}{cc}
         S_{\beta\alpha}(i) & T_{\beta\alpha}(i) \\
         0 & -1 \\
       \end{array}
     \right].
     \end{equation*}
     Similarly, the $1 \times 1$ sub-matrix of $\mathbf{R}_{\alpha\beta}$ consisting of the row $ii$ and the column $ii$ is $\big[S_{\alpha\beta}(i)\big]$ and the $2 \times 2$ sub-matrix of $\mathbf{R}_{\alpha\beta}$ consisting of the rows $ij, ji$ and the columns  $ij,ji$ with $i<j$ is
     \begin{equation*}
     \left[
       \begin{array}{cc}
         S_{\alpha\beta}(i) & T_{\alpha\beta}(i) \\
         0 & -1 \\
       \end{array}
     \right].
     \end{equation*}
     It is trivial that $S_{\beta\alpha}(i)S_{\alpha\beta}(i) = 1$ and it can be verified
     \begin{equation*}
      \left[
       \begin{array}{cc}
         S_{\beta\alpha}(i) & T_{\beta\alpha}(i) \\
         0 & -1 \\
       \end{array}
     \right] \left[
       \begin{array}{cc}
         S_{\alpha\beta}(i) & T_{\alpha\beta}(i) \\
         0 & -1 \\
       \end{array}
     \right] = \left[
                 \begin{array}{cc}
                   1 & 0 \\
                   0 & 1 \\
                 \end{array}
               \right]
     \end{equation*}
       by the direct computation.\qed
\subsection{Initial condition}
The contour integral of (\ref{1017pm515}) multiplied by $\mathbf{J}(t)\prod_ib_{\nu(i)}^{-y_i}\prod_i\xi_i^{-y_i-1}$ from the left, that is, the right-hand side of (\ref{758pm512}) still satisfies (\ref{832pm515}) and (\ref{1019pm515}). (The contour is the one introduced in Theorem \ref{1249pm523}.) Hence, it remains to show that all transition probabilities $P_{(Y,\nu)}(X,\pi;t)$ satisfy the initial condition
\begin{eqnarray}
P_{(Y,\nu)}(X,\pi;0) &=~& \sum_{\sigma\in \mathcal{S}_N}\dashint_c\cdots \dashint_c {A}_{\sigma}^{\pi\nu}\prod_{i=1}^Nb_{\pi(i)}^{x_i}b_{\nu(i)}^{-y_i}\prod_{i=1}^N\Big(\xi_{\sigma(i)}^{x_i-y_{\sigma(i)}-1}\Big)~ d\xi_1\cdots d\xi_N \nonumber\\[5pt]
&=~&\begin{cases}
1 &~~\textrm{if $(Y,\nu) = (X,\pi)$;} \\
0 &~~\textrm{otherwise}
\end{cases}\label{159am527}
\end{eqnarray}
where ${A}_{\sigma}^{\pi\nu}$ is the $(\pi,\nu)$-element of $\mathbf{A}_{\sigma}$. We will show that the integral with the identity permutation in the sum satisfies (\ref{159am527}), and other integral terms with non-identity permutations are zero.
\begin{proof}
If $\sigma$ is the identity permutation, then  $\mathbf{A}_{\sigma}$ is the identity matrix. Hence, if $\pi \neq \nu$, then the integral is zero. It is easy to see that if $\pi = \nu$ and $x_i = y_i$ for all $i$, then the integral is  1, and if  $\pi = \nu$ and $x_i >  y_i$ for some $i$ (recall that our model is totally asymmetric), then the integral becomes zero when integrating with respect to $\xi_i$. Now, suppose that $\sigma$ is not the identity permutation. Note that the factors in ${A}_{\sigma}^{\pi\nu}$ are from (\ref{625pm724}), all poles arising from ${A}_{\sigma}^{\pi\nu}$, if any, are outside the contours. There exists an $i$ such that $x_i - y_{\sigma(i)}-1 \geq 0$ because each $x_i \geq y_i$ and $\sigma$ is not the identity permutation. Hence, integrating with respect to $i$, the integral is 0.
\end{proof}
 \section{Conclusion}
 In this paper, we have shown that the Bethe Ansatz method is still applicable to the multi-species TASEP with species-dependent rates. Theorem \ref{1249pm523} provides the transition probabilities for all possible compositions of species, which is expected to be used to study further objects such as the current distribution for some special initial configurations. The methods used in this paper have limitations to extend to the ASEP ($0<p<1$) for now, but it would be interesting to see if the methods can be used to study the species inhomogeneity of other multi-species models.
\\ \\
\textbf{Acknowledgement} This work was supported by the faculty development competitive research grant (090118FD5341) by Nazarbayev University.
\\ \\


\begin{thebibliography}{99}
\bibitem{Baxter} {Baxter, R.J.:} Exactly solved models in Statistical mechanics, Academic Press (1989)
\bibitem{Borodin-Wheeler} {Borodin, A. and Wheeler, M.:} Coloured stochastic vertex models and their specctral theory, arXiv:1808.01866.
\bibitem{Borodin-Bufetov} {Borodin, A. and Bufetov A.:} Color-position symmetry in interacting particle systems, Ann. Probab. \textbf{49}(4): 1607--1632, (2021).
\bibitem{Chatterjee-Schutz-2010} {Chatterjee, S. and Sch\"{u}tz, G.:} Determinant representation for some transition probabilities in the TASEP with second class particles, J. Stat. Phys., {\textbf{140}}, 900--916, (2010).
\bibitem{Ferrari-Ghosal-Nejjar-2019} {Ferrari, P.L., Ghosal, P., Nejjar, P.:} Limit law of a second class particle in TASEP with non-random initial condition, Ann. Inst. H. Poincar\'{e} Probab. Statist., \textbf{55(3)}, 1203--1255, (2019).
\bibitem{Karbach-Muller} {Karbach, M. and Muller, G.:} Introduction to the Bethe ansatz I, arXiv:cond-mat/9809162v1.
\bibitem{Kuan-2020} {Kuan, J.:} Determinantal expressions in multi-species TASEP, Symmetry, Integrability and Geometry: Methods and Applications (SIGMA), \textbf{16}, 133, (2020).
\bibitem{Lee-2018} {Lee, E.:} On the TASEP with the second class particles, Symmetry, Integrability and Geometry: Methods and Applications (SIGMA), \textbf{14}, 006, (2018).
\bibitem{Lee-2020} {Lee, E.:} Exact Formulas of the Transition Probabilities of the Multi-Species Asymmetric Simple Exclusion Process, Symmetry, Integrability and Geometry: Methods and Applications (SIGMA), \textbf{16}, 139, (2020).
\bibitem{Lee-Wang-2019} {Lee, E., Wang, D.:} Distributions of a particle's position and their asymptotics in the $q$-deformed totally asymmetric zero range process with site dependent jumping rates, Stochastic Processes and their Applications, \textbf{129}, 1795--1828, (2019).
\bibitem{Nejjar-2020} {Nejjar, P.:}  KPZ Statistics of Second Class Particles in ASEP via Mixing, Communications in Mathematical Physics, \textbf{378}, 601--623, (2020).
\bibitem{Rakos-Schutz-2006}  {R\'akos, A., Sch\"{u}tz, G.M.:} Bethe Ansatz and current distribution for the TASEP with particle-dependent hopping rates, Markov processes and related fields, \textbf{12}, 323--334, (2006).

\bibitem{Tracy-Widom-2013} Tracy, C. and Widom, H.: On the asymmetric simple exclusion process with multiple species, J. Stat. Phys., {\textbf{150}}, 457--470, (2013).
\bibitem{Wang-Waugh-2016} {Wang, D. and Waugh, D.:} {The transition probability of the $q$-TAZRP ($q$-Bosons) with inhomogeneous jump rates,} Symmetry, Integrability and Geometry: Methods and Applications (SIGMA), \textbf{12}, 037, (2016).
\end{thebibliography}
\end{document}